\newtheorem{theorem}{Theorem}[section]
\newtheorem{thm}[theorem]{Theorem}
\newtheorem{lemma}[theorem]{Lemma}
\newtheorem{question}[theorem]{Question}
\theoremstyle{definition}
\newtheorem{definition}[theorem]{Definition}
\newcommand{\NN}{{\mathbb{N}}}
\newcommand{\RR}{{\mathbb{R}}}
\newcommand{\ZZ}{{\mathbb{Z}}}
\newcommand{\sub}{\subseteq}
\newcommand{\sN}[1]{_{#1\in \NN}}
\newcommand{\uhr}[1]{\! \upharpoonright_{#1}}
\newcommand{\PI}[1]{\Pi^0_{#1}}
\newcommand{\bi}{\begin{itemize}}
\newcommand{\ei}{\end{itemize}}
\newcommand{\bc}{\begin{center}}
\newcommand{\ec}{\end{center}}
\newcommand{\ES}{\emptyset}
\newcommand{\estring}{\la \ra}
\newcommand{\tp}[1]{2^{#1}}
\newcommand{\ex}{\exists}
\newcommand{\fa}{\forall}
\newcommand{\la}{\langle}
\newcommand{\ra}{\rangle}
\newcommand{\seqcantor}{2^{ \NN}}
\newcommand{\cantor}{\seqcantor}
\newcommand{\n}{\noindent}
\newcommand{\sss}{\sigma}
\newcommand{\lland}{\, \land \, }
\newcommand \seq[1]{{\left\langle{#1}\right\rangle}}
\newcommand\+[1]{\mathcal{#1}}
\newcommand{\ol}{\overline}
\newcommand{\lra}{\leftrightarrow}
\newcommand{\LR}{\Leftrightarrow}
\newcommand{\RA}{\Rightarrow}
\newcommand{\sssl}{\ensuremath{|\sigma|}}
\renewcommand{\hat}{\widehat}
 \newcommand{\pp}[3]{\prod_{#2 \in #3}   {x_#2}^{{#1}_#2}} 
 \newcommand{\si}[1]{ \{ #1 \}}
\begin{document}

\title[Complexity of isomorphism between   profinite groups]{The complexity of   isomorphism between  countably based profinite groups}

 \author{Andr\'e Nies}
\begin{abstract} A topological group $G$ is  profinite if it is compact and totally disconnected. Equivalently, $G$ is  the  inverse limit of a surjective  system of finite groups carrying  the discrete topology. We discuss how to represent a countably based profinite group as a point in a Polish space. Then we  study the complexity of isomorphism using the theory of Borel reducibility in descriptive set theory. For topologically finitely generated profinite groups this complexity is the same as the one of identity for reals. In general, it is the same as the complexity of  isomorphism for countable graphs.  \end{abstract}

\maketitle

\section{Introduction}

A compact topological group $G$ is called profinite if  the clopen sets form a basis for the topology.  Equivalently, the open normal subgroups form a  base of nbhds of  the identity. Since open subgroups of a compact group have finite index, this means that  $G$ is the inverse limit of its  system of finite quotients with the natural projection maps. 

All profinite groups in this paper will be countably based without further mention.  This makes it possible to represent them  as  points in a Polish space.  
If $X,Y$ are Polish spaces and $E, F$ equivalence relations on $X,Y$ respectively, one writes $(X,E) \le_B (Y,F)$ (or simply $E \le_B F$) if there is a Borel function $g \colon X \to Y$ such that $uEv \leftrightarrow gu F gv$ for each $u,v \in X$.  

The goal of this paper is to study the complexity of (topological) isomorphism for various classes of profinite groups.

For topologically finitely generated profinite groups,  the complexity of isomorphism  turns out to be rather low down: it  is the same as the one of identity for reals. For the upper bound we   rely on  a result of Lubotzky 
\cite{Lubotzky:01} to show that isomorphism is the orbit equivalence relation of the action of a compact group, hence closed, and hence Borel below identity on $\RR$.  To show hardness we use another result of Lubotzky 
\cite{Lubotzky:05}.

  The lowness of isomorphism for  finitely generated  profinite groups  contrasts with the complexity of isomorphism for abstract finitely generated groups, which is as hard as possible: by a result of Thomas and Velickovic~\cite{Thomas.Velickovic:99} it is Borel complete for  Borel equivalence relations with all classes countable.

$S_\infty$ denotes the Polish group of permutations of $\omega$. 
An $S_\infty$ orbit equivalence relation is an orbit equivalence relation of a continuous action $S_\infty$ on a Polish space.    We show that isomorphism of general profinite groups is  complete for $S_\infty$ orbit equivalence relations. So, in a set theoretic   sense,  the profinite groups, being either finite or of size the continuum,  behave like countable structures; for instance,  isomorphism of countable graphs is well-known to be $S_\infty$ complete.

\section{Preliminaries}
In a group that is finitely generated as a profinite  group, all subgroups of finite index are open \cite{Nikolov.Segal:07}. This deep theorem implies that the topological structures is determined by the group theoretic structure. In particular, all abstract homomorphisms between such groups are continuous.

\subsection{Completion} \label{ss:completion}
 We follow \cite[Section 3.2]{Ribes.Zalesski:00}. 
 Let $G$ be a group, $\+ V$ a set of normal subgroups of finite index in $G$ such that $U, V \in  \+ V $ implies that there is $W \in \+ V$  with  $W \sub U  \cap V$. We can turn $G$ into a topological group by declaring $\+ V$ a basis of neighbourhoods (nbhds) of the identity. In other words, $M \sub G$ is open  if for each $x \in M$ there is $U \in \+ V$ such that $xU \sub M$. 
 
 The completion of $G$ with respect to $\+ V$ is the inverse limit  $$G_{\+ V} = \varprojlim_{U \in \+ V} G/U,$$ where $\+ V$ is ordered under inclusion and the inverse system is equipped with  the natural    maps: for $U \sub  V$, the  map  $p_{U,V} \colon G/U \to G/V $  is given by $gU \mapsto gV$. The inverse limit can be seen as a closed subgroup of the direct product $\prod_{U \in \+ V} G/U$ (where each group $G/U$ carries  the discrete topology), consisting of the functions~$\alpha $ such that $p_{U,V}(\alpha (gU)) = gV$ for each $g$.  Note that the map $g\mapsto (gU)_{U \in \+ V}$ is a continuous homomorphism $G \to G_{\+ V}$ with dense image; it is injective iff $\bigcap \+ V = \{1\}$. 
 
 If the set $\+ V$ is understood from the context, we will usually  write $\widehat G$ instead of $G_\+ V$.
 
\subsection{The Polish space of  profinite groups} \label{ss:Pol hyperspace}

\begin{definition} \label{def: F_finite }  Let $   \widehat F_k$ be the free profinite group in $k$ generators $x_0,   \ldots, x_{k-1}$  ($ k <  \omega$).  \end{definition} Thus, $   \widehat F_k$ is the profinite completion of   the abstract free group on $k$ generators.  Any topologically finitely generated  profinite group can be written in  the form \[    \widehat F_k / R\]
for some $k$ and   a closed normal subgroup~$R$ of~$   \widehat F_k$.

\begin{definition} \label{def: F_omega} Let $   \widehat F_\omega$ be the free profinite group on a sequence  of  generators $x_0, x_1,  x_2 \ldots $   converging to $1$ \cite[Thm.\ 3.3.16]{Ribes.Zalesski:00}. \end{definition} 
 Thus, $   \widehat F_\omega$ is the   completion   in the sense of Subsection~\ref{ss:completion} of  the   free group $F_\omega$ on   generators $x_0, x_1, \ldots$ with respect to  the system of normal   subgroups of finite index  that contain almost all the $x_i$.  Any   profinite group $G$   has a   generating sequence $\seq{g_i}\sN i$ converging to $1$.  This is easy  to see using coset representatives for a descending  sequence of open normal subgroups that form a fundamental system of nbhds of $1_G$, as in the proof of Thm.~\ref{thm:class ctble} below. (Also  see \cite[Prop. 2.4.4 and 2.6.1]{Ribes.Zalesski:00}.) By the universal property of the completion, the map from the abstract free group induced by $x_i \to g_i$ extends to a continuous epimorphism $ \widehat F_\omega \to G$. So $G$   can be written in  the form \[    \widehat F_k / R\]
where $R$ is a closed normal subgroup of $   \widehat F_k$.  

In the following let $k \le \omega$. For a compact Polish group $G$ (such as $   \widehat F_k$) let  $ \+ N(G)$ denote  the collection  of normal closed subgroups of $G$. It is standard to equip $ \+ N(G)$ with the structure of a Polish space, in fact of a compact countably based space, as we describe next.

The compact  subsets of a complete separable metric  space  $(M,\delta)$ form a complete metric  space $\+ K(M)$ with the usual Hausdorff distance \bc $\delta_H(A,B) = \max (\sup_{x\in A} \inf_{y\in B} \delta(x,y), \sup_{y\in B} \inf_{x\in A} \delta(x,y))$. \ec  Let $D \sub M$ be countable dense. Then $\+ K(M)$  contains as a dense subset  the set  of finite subsets of $D$.  Since $\+ K(M)$ is a metric space, this implies that $\+ K(M)$  is countably based.
   If $M$ is compact then $\+ K(M)$ is   compact as well.

 It is well-known  that every compact Polish group $G$ has a compatible bi-invariant metric $\delta$; that is, $\delta$ induces the given topology, and $\delta(xg, yx) = \delta(gx,gy) = \delta(x,y)$ for any $x,y, g \in G$.

We verify that $ \+ N(G)$ is  closed in  $\+ K(G)$, using that   the metric on $G$  is bi-invariant. Firstly, if a sequence of    subgroups $\seq {U_n}\sN n$ in  $\+ K(G)$ converges  to $U\in \+ K(G)$, then $U$ is a subgroup. For suppose  $a,b \in U$. For each $\epsilon > 0 $, for sufficiently  large $n$  we can choose $a_n, b_n \in U_n$  with $d(a_n, a) < \epsilon$ and $d(b_n, b) < \epsilon$. Then $a_nb_n \in U_n$ and $d(a_nb_n, ab) \le d(a_nb_n, a_n b) + d(a_nb, ab) < 2\epsilon$. Secondly,  for each $c\in G$ the conjugation  map $A  \to A^c$ is an isometry of $\+ K(G)$. If  all the $U_n$ are normal in $G$, then   $U^c = (\lim_n U_n)^c = \lim_n U_n = U$.  Hence $U$ is normal. 

(One can avoid the  hypothesis that  $G$ is compact, as long as there is a bi-invariant  metric compatible with the topology: the closed normal subgroups of a Polish group $G$ form a Polish space, being a closed subset of the Effros space $\+ F (G)$ of non-empty closed sets in $G$. While this space is usually seen as  a Borel structure, it can be topologized using  the Wijsman topology,  the weakest topology that makes all the maps $C \to d(g,C)$, $C\in \+ F (G)$, continuous.)

We conclude that $\+ N (   \widehat F_k)$ can be seen as a compact Polish space, with a compatible (complete) metric derived from a bi-invariant metric on $   \widehat F_k$.  We will refer to $\+ N (   \widehat F_k)$ as the \emph{space of $k$-generated profinite groups}. 

 For a profinite  group $G$, by $ \mathtt{Aut}(G)$ we denote the group of continuous automorphisms. (Note that by compactness,  the inverse iof a continuous automorphism is continuous as well. Also,  if $G$ is     finitely generated as a profinite group, then every automorphism is continuous by the result of  \cite{Nikolov.Segal:07} mentioned above.) As described in  \cite[Ex.\ 6 on page 52]{Wilson:98},   for any finitely generated profinite group $G$, the group $Q= \mathtt{Aut}(G)$ is compact, and in fact profinite.    To see this,  let $V_n$ be the intersection of the finitely many open subgroups of $G$ of index at most $n$. Note that  $V_n$ is open  and invariant in $G$, and the $V_n$ form a basis of nbhds for $1_G$. Since the sequence  $\seq {V_n}\sN n$  is a base of nbhds of $1$, we can use it  to define a compatible bi-invariant ultrametric on $G$:  $\delta(g,h) = \inf \{  \tp{-n} \colon gh^{-1} \in V_n\}$.   
 Note that $\delta_H(R,S) =  \inf \{  \tp{-n} \colon \, RV_n= SV_n\}$.

 Now let $W_n \unlhd Q$ be the normal subgroup consisting of the $\theta \in Q$ that induce the identity on $G/V_n$. Clearly $ \bigcap_n W_n = \{1\}$. Taking  $\seq {W_n}\sN n$ as a fundamental system of nbhds of $1$ in $Q$ yields the desired topology on~$Q$.


 \begin{lemma} \label{lem:action Q G} Let $G$ be a  finitely generated profinite group $G$, and let   $Q= \mathtt{Aut}(G)$.

\n 
(i) The   natural action of $Q$ on $G$ is continuous. 

\n (ii) The   natural action of $Q$ on $\+ N(G)$ given by  $ \theta \cdot R =  \theta(R)$  is continuous.  \end{lemma}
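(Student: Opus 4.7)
The plan is to exploit two structural facts already present in the excerpt: the topologies on $G$ and on $Q$ are given by the ultrametrics coming from the filtrations $\seq{V_n}\sN n$ and $\seq{W_n}\sN n$, and each $V_n$ is a \emph{characteristic} subgroup of $G$ (it is defined purely group-theoretically as the intersection of all subgroups of index $\le n$, hence is invariant under every $\theta \in Q$). These facts reduce continuity to a coset-level calculation modulo $V_n$.

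For part (i), I would fix a basepoint $(\theta_0,g_0)$ and a level $n$, and show that whenever $\theta_0^{-1}\theta \in W_n$ (so $\theta$ is $2^{-n}$-close to $\theta_0$ in $Q$) and $g g_0^{-1} \in V_n$ (so $g$ is $2^{-n}$-close to $g_0$ in $G$), one has $\theta(g)\theta_0(g_0)^{-1} \in V_n$. Writing $\theta(g) = \theta(gg_0^{-1})\,\theta(g_0)$, the first factor lies in $V_n$ because $V_n$ is characteristic and $gg_0^{-1} \in V_n$, while the definition of $W_n$ gives $\theta(g_0)V_n = \theta_0(g_0)V_n$. Combining these and using normality of $V_n$ yields $\theta(g)V_n = \theta_0(g_0)V_n$, i.e.\ $\delta(\theta(g),\theta_0(g_0)) \le 2^{-n}$. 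This proves joint continuity at $(\theta_0,g_0)$.

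For part (ii), I would use the identity $\delta_H(R,S) = \inf\{2^{-n} : RV_n = SV_n\}$ stated in the excerpt. Assume $\theta_0^{-1}\theta \in W_n$ and $R V_n = R_0 V_n$. Because $\theta$ and $\theta_0$ agree modulo $V_n$ on every element, $\theta(r) \in \theta_0(r) V_n$ for each $r \in R$, so $\theta(R)V_n = \theta_0(R)V_n$. Since $\theta_0$ is a homeomorphism fixing $V_n$ setwise (characteristic again), applying $\theta_0$ to $RV_n = R_0 V_n$ gives $\theta_0(R)V_n = \theta_0(R_0)V_n$. Chaining the two equalities yields $\theta(R)V_n = \theta_0(R_0)V_n$, i.e.\ $\delta_H(\theta(R),\theta_0(R_0)) \le 2^{-n}$, which is the desired joint continuity.

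I do not anticipate a serious obstacle: the argument is essentially a uniformity check that propagates $V_n$-cosets through the action. The one subtle point to record carefully is the characteristic nature of each $V_n$, since without it we could not control $\theta(gg_0^{-1})$ in (i) or identify $\theta_0(V_n)$ with $V_n$ in (ii). Everything else is a direct application of the defining formulas for the ultrametric on $G$, the fundamental system $\seq{W_n}\sN n$ in $Q$, and the Hausdorff distance expression for $\delta_H$ on $\+ N(G)$.
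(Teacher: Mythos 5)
Your proof is correct, but it takes a genuinely different route from the paper. The paper does not verify joint continuity by hand: it invokes the standard fact (Kechris, I.9.14) that a separately continuous action of a Polish group on a Polish space is automatically continuous, and then checks only separate continuity — each $\theta$ is continuous on $G$ (resp.\ extends to a continuous map on $\+ K(G)$ restricting to $\+ N(G)$), while for fixed $g$ (resp.\ fixed $R$) the condition $\theta\chi^{-1}\in W_n$ gives $\delta(\theta g,\chi g)<2^{-n+1}$ (resp.\ $\delta_H(\theta(R),\chi(R))<2^{-n+1}$). You instead prove joint continuity directly via the ultrametric estimates, propagating $V_n$-cosets through the action and using that each $V_n$ is characteristic (which the paper indeed records by calling $V_n$ ``invariant'', and which holds because a continuous automorphism with continuous inverse permutes the open subgroups of index at most $n$). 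Your calculation is sound: with $\theta\in\theta_0W_n$ (equivalently $\theta_0^{-1}\theta\in W_n$, by normality of $W_n$) and $gg_0^{-1}\in V_n$ one gets $\theta(g)\in V_n\,\theta_0(g_0)V_n=\theta_0(g_0)V_n$, and in (ii) the identity $\delta_H(R,S)=\inf\{2^{-n}\colon RV_n=SV_n\}$ lets you chain $\theta(R)V_n=\theta_0(R)V_n=\theta_0(R_0)V_n$. What your approach buys is a self-contained, quantitatively uniform argument that avoids the Baire-category machinery behind the separate-to-joint continuity theorem; what the paper's approach buys is brevity and independence from the specific filtration, since it only needs some compatible group topology on $Q$ rather than explicit coset bookkeeping.
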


\begin{proof} (i)  Since $Q$ is a Polish group and $G$ a Polish space, it suffices to show that the action is separately continuous, namely, when we fix one argument the resulting unary map is continuous  \cite[I.9.14]{Kechris:95}. Firstly, each $\theta \in Q$ is (uniformly) continuous as a map $G \to G$. Secondly, for each $g \in G$,  if $\theta \chi^{-1} \in W_n$, then $\delta(\theta g, \chi g) < \tp{-n+1}$, so for any $g\in G$ the map $Q \to G$ given by $\theta \to \theta g$ is continuous.

(ii)  We verify separate continuity. Firstly, any  $\theta \in Q$ can be extended to a continuous map $\+ K (G) \to \+ K(G)$, and hence its restriction is a continuous map $\+ N(G) \to \+ N(G)$.  Secondly, for each $R \in \+ N(G)$,  if $\theta \chi^{-1} \in W_n$ then by definition of the Hausdorff distance we have $\delta_H(\theta(R), \chi(R) < \tp{-n+1}$. 
\end{proof}

Observe  that $\mathtt{Aut}(\hat F_\omega)$ is not compact as  $S_\infty$  embeds into it  as a  closed subgroup. However, $\mathtt{Aut}(\hat F_\omega)$ is non-archimedean: the open subgroups  $W_n$ defined above form  a nbhd base of $\{1\}$. So it is     isomorphic to  a closed subgroup of $S_\infty$.  

\section{Complexity of isomorphism   between   finitely generated profinite groups}

We view the disjoint union $\bigsqcup_{k< \omega} \+ N (  { \widehat F_k})$ as the space of finitely generated profinite groups. Note that this space becomes a    Polish space    by declaring  a set $U$ open if $U \cap \+ N (  { \widehat F_k})$ is open for each $k < \omega$. 
\begin{thm} \label{thm:smooth} The isomorphism relation $E_{f.g.}$ between  finitely generated profinite groups is Borel equivalent to $\mathtt{id}_\RR$, the identity equivalence relation  on $\RR$.  \end{thm}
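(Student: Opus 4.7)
The strategy is to establish the two Borel reductions $E_{f.g.} \le_B \mathtt{id}_\RR$ and $\mathtt{id}_\RR \le_B E_{f.g.}$ separately. For the upper bound, I plan to work inside each component $\+ N(\widehat F_k)$ and exploit the continuous action of the compact Polish group $Q_k = \mathtt{Aut}(\widehat F_k)$ supplied by Lemma~\ref{lem:action Q G}. The central claim to verify is that two groups $R, S \in \+ N(\widehat F_k)$ satisfy $\widehat F_k/R \cong \widehat F_k/S$ iff they lie in the same $Q_k$-orbit. Granted this, the orbit equivalence relation is closed (orbits are continuous images of the compact $Q_k$), and orbit equivalence relations of continuous compact Polish group actions are well known to be smooth, yielding Borel maps $f_k \colon \+ N(\widehat F_k) \to \RR$ witnessing that the restriction of $E_{f.g.}$ to $\+ N(\widehat F_k)$ is Borel below $\mathtt{id}_\RR$.

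The nontrivial direction of the claim is a lifting problem: given an isomorphism $\phi \colon \widehat F_k/R \to \widehat F_k/S$, produce $\theta \in Q_k$ with $\theta(R) = S$. Any lifts $y_i \in \widehat F_k$ of $\phi(x_i R)$ determine a continuous endomorphism $\widehat F_k \to \widehat F_k$ sending $x_i$ to $y_i$ and descending to $\phi$; what one needs is to choose the $y_i$ so that they topologically generate $\widehat F_k$, in which case the endomorphism is surjective and, by the Hopfian property of the finitely generated profinite group $\widehat F_k$, an automorphism. This is the step at which I invoke the result of Lubotzky~\cite{Lubotzky:01}. To glue the $f_k$ into a single Borel reduction on $\bigsqcup_k \+ N(\widehat F_k)$, I use the canonical surjections $\widehat F_N \to \widehat F_k$ that kill the extra generators $x_k, \dots, x_{N-1}$: for $R \in \+ N(\widehat F_k)$ and $N \ge k$, the preimage $R^{(N)} \in \+ N(\widehat F_N)$ of $R$ depends Borel-measurably on $R$, and $\widehat F_N/R^{(N)} \cong \widehat F_k/R$. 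Sending $R$ to the sequence $(f_N(R^{(N)}))_{N \ge k}$, with the low coordinates padded by a fixed constant, produces a Borel reduction into $\RR^\omega$; since $\mathtt{id}_{\RR^\omega}$ is Borel equivalent to $\mathtt{id}_\RR$, this yields $E_{f.g.} \le_B \mathtt{id}_\RR$.

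For the lower bound, I apply the construction of Lubotzky~\cite{Lubotzky:05} to obtain a continuum of pairwise non-isomorphic finitely generated profinite groups depending Borel-measurably on a parameter $r \in \RR$; provided each group can be presented as a point of $\+ N(\widehat F_{k_0})$ for a uniform $k_0$, this gives $\mathtt{id}_\RR \le_B E_{f.g.}$ directly. I expect the main obstacle to be identifying the precise formulation of Lubotzky's theorem~\cite{Lubotzky:01} that supplies the lifting step in the upper bound, and verifying that $\widehat F_k$ is indeed Hopfian as a topological group. A secondary, more bookkeeping-style issue is the patching of the $f_k$ into a genuinely Borel global reduction so that isomorphic groups coming from components indexed by different $k$ and $m$ get the same image in $\RR$.
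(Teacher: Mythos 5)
Your fixed-$k$ analysis and your lower bound are essentially the paper's proof: for each $k$ the paper also uses Lubotzky's result from \cite{Lubotzky:01} (whose content is exactly the Gasch\"utz-type lifting of generators you describe) to identify isomorphism on $\+ N(\widehat F_k)$ with the orbit equivalence relation of the compact group $\mathtt{Aut}(\widehat F_k)$ acting as in Lemma~\ref{lem:action Q G}, hence a closed and therefore smooth relation; and the lower bound in the paper is precisely your plan, realized by $G_P=\prod_{p\in P}\mathtt{SL}_2(\ZZ_p)=\mathtt{SL}_2(\hat\ZZ)/\prod_{q\notin P}\mathtt{SL}_2(\ZZ_q)$, which settles your proviso about a uniform number of generators since all $G_P$ are quotients of the finitely generated group $\mathtt{SL}_2(\hat\ZZ)$.

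The genuine gap is in your gluing step. The map you propose, sending $R\in\+ N(\widehat F_k)$ to the sequence whose $N$-th coordinate is $f_N(R^{(N)})$ for $N\ge k$ and a fixed padding constant for $N<k$, is not a reduction to $\mathtt{id}_{\RR^\omega}$: if $\widehat F_k/R\cong\widehat F_m/S$ with $k<m$, then in each coordinate $N$ with $k\le N<m$ the image of $R$ carries the invariant $f_N(R^{(N)})$ while the image of $S$ carries the padding constant, so the two sequences differ even though the groups are isomorphic (the tails from $\max(k,m)$ on do agree, but exact equality in $\RR^\omega$ requires agreement everywhere, and neither shifting the index nor any fixed padding repairs this, since the component $m$ of the other group is not available when defining the map on the $k$-component). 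The paper avoids this coordinate-matching problem entirely: using the same preimage construction you use (there written as the continuous embedding $\phi(U)=\la U\ra P$ of $\+ N(\widehat F_r)$ into $\+ N(\widehat F_k)$ with $\widehat F_r/U\cong\widehat F_k/\phi(U)$), it shows that for each pair $r<k$ the two-variable relation $\{\la U,S\ra: \widehat F_r/U\cong\widehat F_k/S\}$ is closed in $\+ N(\widehat F_r)\times\+ N(\widehat F_k)$, so that $E_{f.g.}$ itself is a closed equivalence relation on $\bigsqcup_k \+ N(\widehat F_k)$, and then invokes smoothness of closed equivalence relations once, globally. Your ingredients (continuity and isomorphism-invariance of $R\mapsto R^{(N)}$) suffice for exactly this argument, so the repair is modest, but as written the padded product map fails.
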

\begin{proof}   
We begin by   showing  that $E_{f.g.} \le_B \mathtt{id}_\RR$; this property of an equivalence relation on a Polish space is called smoothness (e.g.\ \cite[Section 5.4]{Gao:09})  

An easy proof  uses that a f.g.\ profinite group is given by the set of its finite quotients  \cite[16.10.7]{Fried.Jarden:06}.
Instead we provide a self-contained  proof that could  still help for the computable case.
 We thank Alex Lubotzky for pointing out the crucial fact in \cite[Prop.\ 2.2]{Lubotzky:01} used below to show this   smoothness of  $E_{f.g.}$.

Firstly   let us   consider  the case of a  fixed finite number  $k $ of generators.  Write $G=    \widehat F_k$ and as before let   $Q= \mathtt{Aut}(G)$.

 For $S, T \in \+ N(   \widehat F_k)$, we have \[    \widehat F_k/S \cong    \widehat F_k/ T \lra  \ex \theta \in Q \, [ \theta(S) = T] \]
by \cite[Prop.\ 2.2]{Lubotzky:01} 
(This depends on a  Lemma of Gasch\"utz   on lifting generating sets of finite groups. See e.g.\ \cite[17.7.2]{Fried.Jarden:06}.) 
     The natural action of $Q$ on $\+ N(   \widehat F_k)$ is continuous by Lemma~\ref{lem:action Q G}. Then, since $Q$ is compact,  the  orbit equivalence relation of this  action on  $ \+ N(   \widehat F_k)$ is closed, and hence smooth; see e.g.\  \cite[5.4.7]{Gao:09}.

In the general case,   two  given profinite groups may have a  different number of generators, say $r <  k< \omega$. 

\begin{lemma}  There is a continuous   embedding $\phi \colon \, \+ N(   \widehat F_r) \to \+ N(   \widehat F_k)$ such  that $ \widehat F_r/U \cong  \widehat F_k/\phi (U)$ for each $U \in \+ N( \widehat F_r)$.   \end{lemma}
Assuming this,     by the case of $k$ generators and taking a preimage under $\phi$,  the relation $\{ \la U, S \ra \colon    \widehat F_r / U \cong    \widehat F_k / S\}$ is closed as a subset of  $\+ N(   \widehat F_r) \times \+ N(   \widehat F_k)$. This shows that 
$E_{f.g.} $ is closed on $\bigsqcup_{k< \omega} \+ N (  { \widehat F_k})$.

We verify the lemma. The embedding is as expected from the case of abstract finitely generated groups. Let $P$ be the closed normal subgroup of $\hat F_k$ generated by $\{x_{r}, \ldots, x_{k-1} \}$. Then $\hat F_k/P = \hat F_r$ and hence $\hat F_k = \hat F_r P$. 

For $U \in \+ N( \widehat F_r)$ let  $\la U \ra$ denote the closed normal subgroup of $\hat F_k$ generated by $U$. Note that $\la U \ra $ is the closure of the set of finite products $\prod_{i=1}^n u_i^{p_i}$ for $u_i \in U$ and $p_i \in P$. 

Define  $\phi(U) = \la U \ra P$.  Note that $\phi(U) \in \+ N(\hat F_k)$ because the product of two closed normal subgroups is closed again. 
As in Subsection~\ref{ss:Pol hyperspace} above, let $V_n$ be the intersection of the finitely many open subgroups of $\hat F_r$ of index at most $n$, and let $\delta$ be the corresponding distance.  To show $\phi$ is continuous, given $L \lhd_o \hat F_k$ let $n$ be so large that $V_n \le L \cap \hat F_r$. Suppose   $U, V \in \+ N(\hat F_r)$ and   $\delta_H(U,V)\le \tp{-n}$ where $\delta_H$ denotes the Hausdorff distance in $\hat F_r$. Then $U (L \cap \hat F_r) = V (L \cap \hat F_r)$. We show that $\phi(U) L = \phi(V) L$. By symmetry, it suffices to verify that $u^p \in \phi(V) L$ for each $u \in U$ and $p \in P$. Now $u = v\ell$ for some $v \in V$ and $\ell \in L$. Then $u^p= v^p \ell^p \in \la V \ra P L = \phi(V) L$. This shows that $\phi$ is uniformly continuous and completes the lemma.

 
We prove  next the converse relation $E_{f.g.} \ge_B \mathtt{id}_\RR$. A profinite group $H$  is called \emph{finitely presented} if $H =    \widehat F_k / R$, $k < \omega$, and $R$ is finitely generated as a closed normal subgroup of $   \widehat F_k$. To show that $\mathtt{id}_\RR \le_B E_{f.g.}$, we use an  argument  of Lubotzky~\cite[Prop 6.1]{Lubotzky:05} who showed  that there are continuum many non-isomorphic  profinite groups that are finitely presented  as profinite groups. For a set $P$ of primes  let 
 $$G_P= \prod_{p\in P} \mathtt{SL}_2(\ZZ_p)= \mathtt{SL}_2( \hat \ZZ)/ \prod_{q\not \in P} \mathtt{SL}_2(\ZZ_q).$$
Here $\ZZ_p$ is the profinite ring  of $p$-adic integers, and $\hat   \ZZ$ is the completion of $\ZZ$, which is isomorphic to $\prod_{p \, \text{prime}} \ZZ_p$. The second equality shows that $G_P$ is finitely presented as a profinite group. Clearly the map  $P \to G_P$ is Borel, and $P= Q \lra G_P \cong G_Q$. 
\end{proof}
We note that Silver's dichotomy theorem, e.g.\ \cite[5.3.5]{Gao:09},  implies that any equivalence relation that is  strictly Borel below $\mathtt{id}_\RR$ has  countably many classes. So  the plain result of Lubotzky~\cite[Prop 6.1]{Lubotzky:05} now already yields  the Borel equivalence $E_{f.p.} \equiv_B E_{f.g.} \equiv_B \mathtt{id}_\RR$. However, by the proof of the result explained  above,  we in fact don't need Silver's result.

 Matthias Aschenbrenner has  pointed out a connection to a problem due to Grothendieck. A \emph{Grothendieck pair} is an embedding $u \colon G \to H$ of non-isomorphic f.p.\ residually finite  groups  such that the profinite completion $\hat u \colon \hat G \to \hat H$ is an isomorphism. Bridson and Grunewald \cite{Bridson.Grunewald:04} showed  that such pairs exist, thereby answering Grothendieck's question.  

 A Borel equivalence relation $E$ with all  classes countable   is called  \emph{weakly universal} if for  each such $F$, there is a Borel function $g$ such that $xFy \to g(x) E g(y)$, and for each $z$ the preimage $g^{-1}([z]_E)$ contains at most countably many $F$-classes.  One can slightly  modify  the usual proof, based on the 0-1-law in measure theory, that almost equality of infinite bit sequences  is not smooth, in order to  show that no weakly universal equivalence relation is smooth. 

Jay Williams \cite{Williams:15} has proved that the isomorphism relation for  f.g.\ groups of solvability class~$3$ is weakly universal. In general solvable f.g.\ groups  are not always residually finite; in fact they can have unsolvable word problem. However it is possible that   Jay's groups are r.f (which needs to be checked). If so,  then since   the process of profinite completion is Borel,  there are  now   non-isomorphic residually finite f.g.\ groups $G,H$ with isomorphic profinite completions via  Theorem~\ref{thm:smooth}.

\section{Towards complexity of isomorphism for  profinite groups}  
We now consider profinite groups that    aren't necessarily finitely generated.
As before, we think of such a group as being given by a   presentation $    \widehat F_k / N$, $N$ closed, where now  $k= \omega$. Note that   one has to explicitly require that   isomorphisms are continuous (while  continuity holds  automatically  for  algebraic homomorphisms between f.g.\ profinite groups). 

For a profinite group $G$, the commutator subgroup $G'$ is the least closed normal subgroup $S$ such that $G/S$ is abelian. The  closed normal subgroups $N \in \+ N(   \widehat F_\omega)$ such that  $(   \widehat F_\omega) ' \sub N $   form a closed subset of $\+ N(   \widehat F_\omega)$. To see this, note that  as the usual algebraic commutator subgroup   $ F_\omega' $ is dense in $  (\widehat F_\omega) ' $,  it suffices to require that  $ F_\omega' $ is contained in $N$. In this way  one obtains the space $\+ N_{ab} (   \widehat F_\omega)$ of  presentations of   abelian  profinite groups.  

 As pointed out by A.\ Melnikov, even isomorphism of these  groups  is quite complex.   Pontryagin  duality  (see e.g.\ \cite{Hofmann.Morris:06}) is a contravariant  functor on the category of abelian locally compact   groups $G$,  that associates  to   $G$     the  group $  G^*$ of continuous homomorphisms from $G$  into the unit circle $\mathbb T$,  with the compact-open topology (which coincides with  the topology inherited from the product topology if $G$ is discrete). For a morphism $\alpha \colon G \to H$ let $\alpha^* \colon H^* \to G^*$ be the morphism defined by $\alpha^*(\psi) = \alpha \circ \psi$. 
 
 The Pontryagin  duality theorem says that     for each~$G$  we have  $ G\cong ({G^*})^* $  via the   map that sends $g\in G$ to the map $\theta \to \theta(g)$.  A special case of this states that (discrete) abelian   torsion  groups $A$ correspond to  abelian   profinite groups (see \cite[Thm.\ 2.9.6]{Ribes.Zalesski:00} for a self-contained proof of this   case).  Then, as $A$ ranges  over    the abelian countable torsion  groups, $A^*$ ranges over  the  abelian  profinite groups. We have  $A \cong B$ iff $A^* \cong B^*$. The duality functor and its inverse are Borel with these restrictions on  the domain and range. Therefore  the  isomorphism  relation  between abelian countable torsion groups is Borel equivalent to continuous isomorphism between    abelian profinite groups. 
 
 By Friedman and Stanley \cite{Friedman.Stanley:89}, the  isomorphism  relation  between abelian countable torsion groups is  strictly in between $E_0$ (a.e.\ equality of infinite bit sequences, which is Borel equivalent to isomorphism of rank 1 abelian groups)  and graph isomorphism (an $S_\infty$ complete orbit equivalence relation). It  is closely related to the equivalence relation $\mathtt{id}(2^{< \omega_1})$ discussed in \cite[Section 9.2]{Gao:09}, which is modelled on the  classification of  countable abelian torsion groups via Ulm invariants.  Also see the diagram \cite[p.\ 351]{Gao:09} which shows that $\mathtt{id}(2^{< \omega_1})$ is strictly   between $E_0$ and graph isomorphism.

\section{Describing profinite groups by filters on a countable lattice} 


In this section we introduce  another way of representing  profinite groups as points in a Polish space. We view them as filters on the lattice  $\mathbb P = (P, \le,  \cap , \cdot) $  of open normal subgroups of $\hat F_\omega$, where
\[  P = \{ L \unlhd_o \hat F_\omega \colon  x_i \in L \, \text{for almost every} \, i \}. \]
This is closer to the view of profinite groups as inverse limits of finite groups. Besides an application in Section~\ref{s:class countable structures}, this view is also more appropriate for an analysis of the complexity of the isomorphism relation between  profinite groups using the tools of computability theory.

When defining the lattice  $\mathbb P$ we can equivalently replace $\hat F_\omega$ by the free group $F_\omega$ with the topology given after Def.\ \ref{def: F_omega}.
For, by \cite[Prop 3.2.2]{Ribes.Zalesski:00} we have $L = \overline{ (F_\omega \cap L)}$  for each   $L\unlhd_o \hat F_\omega$. Also, for each $N \unlhd_o F_\omega$ containing almost all the generators, we have $N = \ol N \cap F_\omega$.

  We can effectively list without repetitions all the epimorphisms $\phi \colon F_\omega \to B$ where  $B$ is a finite group such that $\phi(x_i)= 1$ for almost every $i$. Each $L\in P$, viewed as a subgroup of $F_\omega$,  has the form $\ker \phi$ for such a $\phi$, and hence   can be described  effectively by a single natural number in such a way that the ordering  and lattice  operations are computable. We denote by $L_i$ the element of $\mathbb P$ described by $i$.   So we can also view $\mathbb P$ as a computable lattice defined on $\omega$. Without loss of generality we may assume that $L_0 > L_2 > L_4> \ldots$ is a base of nbhds of $1 \in F_\omega$ for the topology on $F_\omega$ we given after Definition~\ref{def: F_omega}. As usual we obtain  a compatible ultrametric  $\delta$ on $F_\omega$ from  this sequence:  $$\delta(g,h)= \inf \{ \tp{-n} \colon \, gh^{-1} \in L_{2n}\}.$$  

Let $\+ Q(\mathbb P)$ denote the set of filters of $\mathbb P$. By the remark above we can view $\+ Q(\mathbb P)$ as an effectively closed (that is, $\PI 1$) set in Cantor space $\cantor$, and hence as a Polish space.  In the following the variables  $R,S$ range over elements of $\+ N (\hat F_\omega)$, and $L$ over elements of $P$. For $Z \in \cantor$ by $Z\uhr n$ we denote the string consisting of the first $n$ bits of~$Z$. If $\sss$ is  a string of length $n$, by $[\sss]$ we denote the set   $\{ Z \in \cantor  \colon \, Z \uhr n =\sss\}$.
\begin{lemma} \mbox{} 

 $\delta_H(R,S) \le \tp{-n} \LR  R L_{2n} = S L_{2n} \LR \fa L \ge L_{2n} [R \le L \lra S \le L]$. \end{lemma}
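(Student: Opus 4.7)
The plan is to establish the two equivalences separately, the first coming essentially from the general observation about bi-invariant ultrametrics already made earlier in the paper (in the paragraph about the topology on $\mathtt{Aut}(G)$), and the second from a careful choice of instance $L$.

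\textbf{First equivalence.} First I would unpack the ultrametric: because the $L_{2n}$ are nested and clopen, and because $\delta$ is the standard ultrametric derived from them, $\delta(g,h) \le 2^{-n}$ is equivalent to $gh^{-1} \in L_{2n}$. Then to show $\delta_H(R,S) \le 2^{-n} \Rightarrow RL_{2n} = SL_{2n}$: given $r \in R$, compactness of $S$ (it is closed in the compact group $\hat F_\omega$) lets us pick $s\in S$ realising the infimum, so $rs^{-1} \in L_{2n}$, hence $r \in L_{2n}s \subseteq L_{2n}S = SL_{2n}$ by normality. Thus $R \subseteq SL_{2n}$, and absorbing $L_{2n}$ gives $RL_{2n} \subseteq SL_{2n}$; symmetry gives equality. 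Conversely, if $RL_{2n} = SL_{2n}$, then every $r \in R$ can be written $s\ell$ with $s\in S$, $\ell \in L_{2n}$, whence $s^{-1}r \in L_{2n}$, so by bi-invariance $\delta(r,s) \le 2^{-n}$; symmetry gives $\delta_H(R,S) \le 2^{-n}$.

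\textbf{Second equivalence.} The forward direction is immediate: if $RL_{2n} = SL_{2n}$ and $L \ge L_{2n}$ with $R \le L$, then $S \le SL_{2n} = RL_{2n} \le LL_{2n} = L$ (since $L \supseteq L_{2n}$), and symmetrically. For the converse, the key move is to instantiate the hypothesis at $L := RL_{2n}$. The obstacle here is checking that this $L$ really belongs to the indexing family, i.e.\ that it is an open normal subgroup of $\hat F_\omega$ containing almost all $x_i$. Normality is clear (product of normal subgroups); openness follows because $RL_{2n}$ is a union of cosets of the open subgroup $L_{2n}$; and it contains $L_{2n}$, which already contains almost all generators, so $RL_{2n} \in P$ (lying above $L_{2n}$). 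Now $R \le RL_{2n}$ trivially, so by hypothesis $S \le RL_{2n}$, whence $SL_{2n} \le RL_{2n}$. Swapping the roles of $R$ and $S$ and repeating gives the reverse inclusion, and therefore $RL_{2n} = SL_{2n}$.

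\textbf{Main obstacle.} The only non-routine point is the membership $RL_{2n} \in P$ in the converse of the second equivalence, which is what lets us quantify over it; everything else is a mechanical unwrapping of the definitions of the ultrametric, Hausdorff distance, and filter membership. In writing up I would state and use once the auxiliary fact $\delta(g,h) \le 2^{-n} \LR gh^{-1} \in L_{2n}$, since both equivalences ultimately reduce to it combined with normality of the $L_{2n}$.
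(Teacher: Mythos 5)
Your proof is correct and follows essentially the same route as the paper's: unpack the Hausdorff distance via the ultrametric to get $R\le SL_{2n}$ and $S\le RL_{2n}$ for the first equivalence, and use that $RL_{2n}$ (resp.\ $SL_{2n}$) is the least element of $P$ above $R$ and $L_{2n}$ for the second. You simply spell out details the paper leaves implicit, such as attaining the infimum by compactness (or discreteness of the metric's values) and verifying $RL_{2n}\in P$.
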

\begin{proof} For the first equivalence,  
clearly,  for each $u \in R$ there is $v \in S$ such that $uv^{-1} \in L_{2n}$ iff  $R \le S L_{2n}$. Symmetrically, 
for each $v \in S$ there is $u \in R$ such that $uv^{-1} \in L_{2n}$ iff    $R \le S L_{2n}$.

 The second equivalence   is clear because $R L_{2n}$ is the least $L \ge R, L_{2n}$. 
\end{proof}

\begin{lemma} \label{homeo} There is a continuous isomorphism   of  lattices  $\Phi \colon \+ N (\hat F_\omega) \to \+ Q(\mathbb P)$ given  by
\begin{eqnarray*}  \Phi(R) & =  & \{ L\in P \colon N \le L\}. \text{ Its inverse is} \\ 
\Theta(\+ G)  & = &  \bigcap \+ G. \end{eqnarray*} 
\end{lemma}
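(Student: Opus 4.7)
The plan is to check that $\Phi$ and $\Theta$ are mutually inverse maps, to verify $\Phi$ is continuous using the preceding distance characterization, and to conclude by compactness that $\Phi$ is a homeomorphism. Well-definedness is routine: $\Phi(R)$ is a filter since the intersection of two open normal subgroups each containing almost every $x_i$ has the same property, and $\Theta(\+ G) = \bigcap \+ G$ is closed normal as an intersection of closed normal subgroups of $\hat F_\omega$.

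For $\Theta \circ \Phi = \mathrm{id}$, given $g \notin R$ I would use that in a profinite group every closed normal subgroup equals the intersection of the open normal subgroups containing it. Since every open neighbourhood of $1$ in $\hat F_\omega$ contains almost every $x_i$ (because $x_i \to 1$), every open normal subgroup of $\hat F_\omega$ already lies in $P$, so there is $L \in P$ with $R \le L$ and $g \notin L$. For $\Phi \circ \Theta = \mathrm{id}$, the inclusion $\+ G \subseteq \Phi(\Theta(\+ G))$ is immediate. Conversely, suppose $\bigcap \+ G \le L$ for some $L \in P$. The closed sets $K \setminus L$, for $K \in \+ G$, form a family of closed subsets of the compact space $\hat F_\omega$ with empty intersection, so by compactness there exist $K_1, \dots, K_n \in \+ G$ with $K_1 \cap \cdots \cap K_n \le L$. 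Directedness places $K_1 \cap \cdots \cap K_n$ in $\+ G$, and upward closure then places $L$ in $\+ G$.

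For continuity, a sub-basic clopen set of $\+ Q(\mathbb P) \subseteq \cantor$ has the form $\{\+ G : L \in \+ G\}$ or its complement, so it suffices to show that $\{R \in \+ N(\hat F_\omega) : R \le L\}$ is clopen for each $L \in P$. Choosing $n$ with $L_{2n} \le L$, the preceding lemma yields that $\delta_H(R, S) \le 2^{-n}$ implies $R \le L \Leftrightarrow S \le L$, so the condition $R \le L$ depends only on the Hausdorff $2^{-n}$-ball of $R$ and is therefore clopen. Since both spaces are compact Hausdorff and $\Phi$ is a continuous bijection, it is automatically a homeomorphism, so $\Theta$ is continuous as well. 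Preservation of the lattice operations (with $\Phi$ order-reversing, converting intersection of subgroups into the filter generated by the union) is then a routine verification using that $L \in P$ is completely determined by the finite quotient $\hat F_\omega / L$.

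The main obstacle is the compactness step in $\Phi \circ \Theta = \mathrm{id}$: a filter is only directed under finite intersections, not closed under arbitrary ones, so the inference from $\bigcap_{K \in \+ G} K \le L$ to $L \in \+ G$ is genuinely topological and relies on compactness of $\hat F_\omega$ rather than just the filter axioms. Everything else is bookkeeping.
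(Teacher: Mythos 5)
Your proposal is correct and follows essentially the same route as the paper: continuity of $\Phi$ via the preceding distance lemma (you phrase it with sub-basic clopen sets, the paper with uniform continuity on initial segments), and continuity of $\Theta$ by the compact-Hausdorff argument. The only difference is that you spell out the bijectivity (including the compactness argument for $\Phi\circ\Theta=\mathrm{id}$), which the paper simply asserts as clear.
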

\begin{proof} Clearly $\Phi$ and $\Theta$ are inverses. To show that  $\Phi$ is (uniformly) continuous, given $ s \in \omega$, let $n$ be least so that $L_{2n} \le L_i$ for each $i< s$. We have $\delta_H(R,S) \le \tp{-n} \LR R L_{2n} = S L_{2n} \RA \Phi(R) \in [\sss] \lra \Phi(S) \in [\sss]$ for each string $\sss$ of length $s$. 

This already implies  that $\Theta$ is (uniformly) continuous by compactness of the two spaces. However, we can also obtain explicit bounds for the uniform continuity: given $n$, suppose  $Z, W \in \+ Q(\mathbb P)$ and $Z\uhr {r+1} = W\uhr {r+1}$ where  $r= \max \{i \colon  \,  L_i \ge L_{2n}\}$.  Then $\Theta(Z) L_{2n}  = \Theta(W) L_{2n}$ and hence   $\delta(\Theta(Z), \Theta(W))  \le \tp{-n}$.
\end{proof}

\section{An upper bound on the complexity of isomorphism between  profinite groups}
\label{s:class countable structures}

 \begin{thm}  \label{thm:class ctble} Isomorphism of profinite groups is classifiable by countable structures. 
\end{thm}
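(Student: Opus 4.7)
The plan is to exhibit the isomorphism relation on $\+ N(\hat F_\omega)$ as the orbit equivalence relation of a continuous action of $Q := \mathtt{Aut}(\hat F_\omega)$, and then invoke the Becker--Kechris theorem. Since Subsection~\ref{ss:Pol hyperspace} already observed that $Q$ is non-archimedean and hence embeds as a closed subgroup of $S_\infty$, such a presentation immediately yields classification by countable structures.

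First I would define the action $\theta \cdot R := \theta(R)$, which is well-defined because a continuous automorphism of $\hat F_\omega$ carries closed normal subgroups to closed normal subgroups. Continuity can be established in the spirit of Lemma~\ref{lem:action Q G}: taking as neighbourhood base of $1_Q$ the subgroups $W_n$ of automorphisms that induce the identity on $\hat F_\omega / L_{2n}$, one checks that $\chi \theta^{-1} \in W_n$ forces $\chi(R) L_{2n} = \theta(R) L_{2n}$ and hence $\delta_H(\chi(R), \theta(R)) \le 2^{-n+1}$, giving continuity in the group variable; continuity in $R$ for fixed $\theta$ is immediate since $\theta$ extends to a homeomorphism of $\+ K(\hat F_\omega)$.

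The key step is to show that the orbit equivalence relation of this action coincides with topological isomorphism of the associated profinite groups. The direction $\theta(R) = S \Rightarrow \hat F_\omega/R \cong \hat F_\omega/S$ is immediate, because $\theta$ descends to an isomorphism of the quotients. For the converse, let $\phi : \hat F_\omega/R \to \hat F_\omega/S$ be a topological isomorphism, write $\pi_R, \pi_S$ for the canonical projections, and consider the two continuous epimorphisms $\alpha := \pi_S$ and $\beta := \phi \circ \pi_R$ from $\hat F_\omega$ onto $\hat F_\omega/S$. Any $\theta \in Q$ satisfying $\alpha \theta = \beta$ would then automatically satisfy $\theta(R) = \theta(\ker \beta) = \ker \alpha = S$, as required.

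The main obstacle is producing such a $\theta$, and I plan to invoke a standard fact about the free profinite group of countable rank with generators converging to $1$ (see e.g.\ Ribes--Zalesskii, Chapter~3): any two continuous epimorphisms $\alpha, \beta : \hat F_\omega \to H$ onto a profinite group of countable weight are related by $\alpha \theta = \beta$ for some $\theta \in \mathtt{Aut}(\hat F_\omega)$. Concretely, one lifts each $\beta(x_i)$ through $\alpha$ to $y_i \in \hat F_\omega$ with $y_i \to 1$, which is possible because $\alpha$ is open and $\beta(x_i) \to 1$; then a back-and-forth construction modifies each $y_i$ by elements of $\ker \alpha$ (chosen still to converge to $1$) so that the $y_i$ topologically generate $\hat F_\omega$, whereupon the universal property of $\hat F_\omega$ extends $x_i \mapsto y_i$ to a continuous automorphism $\theta$ with $\alpha \theta = \beta$. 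Granted this, the Becker--Kechris theorem completes the proof.
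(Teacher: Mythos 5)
Your reduction stands or falls with the claim that for $R,S \in \+ N(\hat F_\omega)$ one has $\hat F_\omega/R \cong \hat F_\omega/S$ iff $\theta(R)=S$ for some $\theta \in \mathtt{Aut}(\hat F_\omega)$, which you base on the asserted ``standard fact'' that any two continuous epimorphisms $\alpha,\beta \colon \hat F_\omega \to H$ differ by an automorphism ($\alpha\theta=\beta$). That fact is false, and the gap is not repairable by the back-and-forth you sketch: unlike $\hat F_k$ for $k<\omega$, the group $\hat F_\omega$ is not Hopfian. Concretely, since $x_i \to 1$, the assignment $x_0 \mapsto 1$, $x_i \mapsto x_{i-1}$ ($i\ge 1$) is a map converging to $1$, so it extends to a continuous epimorphism $\beta \colon \hat F_\omega \to \hat F_\omega$ whose kernel $S$ is a nontrivial closed normal subgroup. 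Taking $\alpha = \mathrm{id}$, there is no $\theta$ with $\alpha\theta=\beta$, since $\beta$ is not injective; and for the orbit equivalence claim itself, $R=\{1\}$ and $S=\ker\beta$ satisfy $\hat F_\omega/R \cong \hat F_\omega \cong \hat F_\omega/S$, yet no automorphism carries $\{1\}$ to $S \neq \{1\}$. So the $\mathtt{Aut}(\hat F_\omega)$-orbit equivalence relation on $\+ N(\hat F_\omega)$ is strictly finer than isomorphism of the quotients, and the identity map is not a Borel reduction of $\cong$ to it. The Lubotzky/Gasch\"utz argument you are transplanting from Theorem~\ref{thm:smooth} genuinely uses that the rank is finite and equals a bound on the number of generators of both quotients (finitely generated profinite groups are Hopfian); it does not extend to rank $\omega$, and your proposed fix of ``modifying the lifts $y_i$ by elements of $\ker\alpha$ until they generate'' can at best produce an epimorphism $\theta$ with $\alpha\theta=\beta$, not an automorphism.

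For contrast, the paper's proof sidesteps automorphisms of $\hat F_\omega$ altogether: each infinite quotient $\hat F_\omega/R$ is identified homeomorphically with Cantor space (via a tree of coset representatives, sending the identity to $0^\omega$), its group difference operation becomes a point $\rho_R$ in the Polish space of continuous binary operations on $\cantor$, and isomorphism becomes the orbit equivalence relation of the group $V$ of homeomorphisms of Cantor space fixing $0^\omega$ acting by conjugation; $V$ is a closed subgroup of $S_\infty$ by Stone duality, and Lemma~\ref{lem:subgroup} then gives classifiability by countable structures. The key point your approach misses is that an arbitrary topological isomorphism of the quotients need not lift to $\hat F_\omega$ as an automorphism, whereas it always is a basepoint-preserving homeomorphism of Cantor space, which is exactly the symmetry the paper's space $X$ is built to exploit.
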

\begin{proof} 
Let $\cong$ denote isomorphism of profinite groups, viewed as a relation on $\+ N (\hat F_\omega)$. We will determine  a closed subgroup $V $ of $S_\infty$ and a continuous  $V$-action on a Polish space $X$  such that  $(\+ N (\hat F_\omega), \cong) $ is Borel below the orbit equivalence relation $E^X_V$. This will suffice for the theorem by a  folklore   fact from  descriptive set theory provided in Lemma~\ref{lem:subgroup}  below. 

To obtain $V$ and its action  on a Polish space $X$, we use   that  every  infinite profinite group $G= \hat F_\omega/ R$   is homeomorphic to  Cantor space $\cantor$. So we can view the group operations as continuous functions on Cantor space. $X$~is the space of continuous binary functions on Cantor space that encode  a ``group difference operation" $\rho(a,b) =  ab^{-1}$ where $0^\omega$  is the identity.  $V$ is the group of homeomorphisms of Cantor space that fix $0^\omega$.   $V$ acts continuously on $X$. The group $V$   can be seen naturally as a closed subgroup of $S_\infty$ via the usual Stone duality.  Given an infinite profinite group $G = \hat F_\omega/ R$, the task is to obtain a homeomorphism of $G$ with Cantor space sending the identity to $0^\omega$. This   yields  a homeomorphic copy $\rho_R$ of the group difference operation as a continuous binary function  on Cantor space.  Once we can do this, we have $\hat F_\omega/ R \cong \hat F_\omega/ S$ $\LR$  there is $g\in V$ sending $\rho_R $ to $\rho_S$, as required. 

We describe how to accomplish this task. Via Lemma~\ref{homeo} we may view $R \in \+ N (\hat F_\omega)$ as a filter on $\mathbb P$. The principal filters form a countable  ($F_\sss$) set in $\+ Q(\mathbb P)$, which we can ignore in the Borel reduction. So we  may further assume that $R$ is non-principal. 
Using  $R$ as a Turing oracle  we may effectively obtain a sequence $F_\omega = L_{i_0} > L_{i_1} > \ldots $ generating $R$ as a filter. In other words,   $\bigcap_n L_{i_n} = R$ when $R$ is viewed as an element of $\+ N (\hat F_\omega)$. 

We write $S_n = L_{i_n}$. To obtain the homeomorphism of $\hat F_\omega/ R$ as a topological space  with Cantor space, for each 
$n$ we can effectively determine  $k_n = |S_n : S_{n+1}|$ and  a sequence  $\seq{ g^{(n)}_{i}}_{i< k_n}$ of coset representatives for $S_{n+1}$ in $S_n$ such that  $g^{(n)}_0 = 1$.  

Let $T$ be the tree of   strings  $\sss \in \omega^{< \omega}$  such that  $\sss(i) < k_i $ for each $i< \sssl$. For $\sssl = n$  we have  a  coset \begin{equation} \label{eqn:coset} C_\sigma = g^{(0)}_{\sss(0)} g^{(1)}_{\sss(1)} \ldots g^{(n-1)}_{\sss(n-1)} S_n.\end{equation} 
The  clopen sets $C_\sss$ form  a basis for    $\hat F_\omega/ R$. In this way  $\hat F_\omega/ R$ is naturally homeomorphic to  $[T]$  where   the identity element corresponds to  $0^\omega$.

For $Z \in [T]$, by $Z\uhr n$ we denote the string consisting of the first $n$ entries  of~$Z$. The group  difference operation   $a,b \to ab^{-1}$ of $G= \hat F_\omega/ R$ can now be seen as a continuous map $[T]\times [T] \to [T]$ given by  $(Z,W) \mapsto Y$ where,  via  the   identifications    of strings of length $n$ with their  coset representatives for    $F_\omega/S_n$ given by (\ref{eqn:coset}),  we have  \bc $(Z \uhr n  S_n) (W \uhr n S_n) =  Y\uhr n S_n $  for each $n$.  \ec 

The standard homeomorphism $[T] \to \cantor $ is defined by $Z \to \bigcup_n F(Z \uhr n)$, where $F$ is the computable map defined as follows. 
Let $F(\estring) =  \estring$.
 If  $F(\sss)$ has been  defined where  $\sssl =n$,  let $F(\sss 0) =  F(\sss) 0^{k_n}$ and $F(\sss i) = F(\sss) 0^{k_n -i} 1$ for  $0< i < k_n$. Note that $0^\omega$ is mapped to $0^\omega$. Via this homeomorphism the group  difference operation  is turned  into a continuous map $\rho_R \colon \cantor \times \cantor \to \cantor$. 

Cantor space $\cantor$ is equipped with the usual ultrametric~$d$. The space of continuous functions $\+ C (\cantor \times \cantor, \cantor)$  is a  Polish space via the supremum  distance based on $d$. Let $X$ be the  closed  subspace of this space consisting of the binary operations $\rho$  that satisfy the group axioms, written in terms of the group difference operation, with $0^\omega$ as the identity element. (That is, substitute $\rho(0^\omega,a)$ for the inverse operation, and $\rho (a, \rho(0^\omega, b))$ for the group operation.) The desired Polish group $V$ is the stabiliser of $0^\omega$ in the Polish group $\+ H(\cantor)$, the homeomorphisms of Cantor space with the topology inherited from $\+ C(\cantor, \cantor)$, which canonically acts on $X$ via \bc $(g \cdot \rho)(u,v) = g \rho(g^{-1} u , g^{-1}v) $.  \ec Clearly, for $R, S \in \+ N(\hat F_\omega)$ of infinite index in $\hat F_\omega$, we have  \[ \hat F_\omega/ R \cong \hat F_\omega/S \lra \ex g \in V  \, [g \cdot \rho_R = \rho_S]. \] 

A homeomorphism of Cantor space is given by its action on the dense countable Boolean algebra $D$ of clopen sets. So $V$ is continuously isomorphic to a  closed subgroup of $\mathtt{Aut}(D)$, and hence of $S_\infty$, as required.  
\end{proof}

\begin{lemma}\label{lem:subgroup} Let $V$ be a closed subgroup of $S_\infty$  with a   Borel action  on a Polish space $X$. Then $E^X_V$ is classifiable by countable structures. \end{lemma}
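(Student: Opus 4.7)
The plan is to realize $E^X_V$ as Borel reducible to a genuine $S_\infty$-orbit equivalence relation, via the induced action construction. A first clean-up step is to invoke the Becker--Kechris change-of-topology theorem (see \cite[Thm.~5.2.1]{Gao:09}) to assume, without loss of generality, that the $V$-action on $X$ is continuous, possibly after refining the Polish topology on $X$; this does not alter the orbits and preserves the Borel class of the orbit equivalence relation.

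The heart of the argument is to form the induced $S_\infty$-space. Since $V$ is closed in $S_\infty$, the coset space $S_\infty/V$ is Polish and admits a Borel section $\sigma \colon S_\infty/V \to S_\infty$ (by a standard selection theorem). Define
\[
Y \;=\; (S_\infty \times X)/{\sim}, \qquad (g,x) \sim (gv^{-1}, vx) \text{ for } v \in V.
\]
Writing each $g \in S_\infty$ as $g = \sigma(gV)\, \tau(g)^{-1}$ for a Borel cocycle $\tau \colon S_\infty \to V$, the assignment $[g,x] \mapsto (gV,\, \tau(g)\, x)$ is a Borel bijection $Y \to (S_\infty/V) \times X$. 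This gives $Y$ the structure of a standard Borel space, and the natural action $h \cdot [g,x] = [hg, x]$ of $S_\infty$ is Borel. A further application of the Becker--Kechris theorem then upgrades this to a continuous $S_\infty$-action on a Polish refinement of $Y$ with the same orbits.

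The reduction is $f \colon X \to Y$ given by $f(x) = [1, x]$, which is Borel. I claim $f$ reduces $E^X_V$ to $E^Y_{S_\infty}$. If $y = vx$ for some $v \in V$, then $f(y) = [1, vx] = [v^{-1}, x] = v^{-1} \cdot f(x)$, so $f(x)$ and $f(y)$ lie in the same $S_\infty$-orbit. Conversely, if $s \cdot f(x) = f(y)$ for some $s \in S_\infty$, then $[s, x] = [1, y]$ in $Y$, and unpacking the equivalence relation forces the existence of $v \in V$ with $s = v^{-1}$ and $y = vx$, so $x \mathrel{E^X_V} y$.

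The main obstacle is not the algebra of the reduction, which is routine, but the topological bookkeeping: one must verify that the induced construction admits a Polish topology supporting a continuous $S_\infty$-action, so that the target $E^Y_{S_\infty}$ is a bona fide $S_\infty$-orbit equivalence relation in the sense required for classifiability by countable structures. Both issues (continuity of the original $V$-action and of the induced $S_\infty$-action) are handled by appropriate invocations of Becker--Kechris, so the argument reduces to carefully packaging these standard descriptive-set-theoretic tools rather than developing new machinery.
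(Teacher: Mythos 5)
Your argument is correct in substance, but it takes a genuinely different route from the paper. The paper works inside the category of $V$-actions: it first replaces $X$ by the universal Borel $V$-space $\mathcal{F}(V)^\omega$ (Becker--Kechris), embeds that space $V$-equivariantly into a logic action $\mathtt{Mod}(\mathcal{L}_1)$ by restricting the known $S_\infty$-embedding, and then handles the fact that $V$ is only a subgroup of $S_\infty$ by \emph{rigidification}: it adjoins a fixed countable structure $M$ with $\mathtt{Aut}(M)=V$, so that any isomorphism between the expanded structures $N_x$, $N_y$ is forced to lie in $V$; this produces the classifying countable structures explicitly. You instead handle the subgroup issue by the induced-action (Mackey) construction, promoting the Borel $V$-space $X$ to a Borel $S_\infty$-space $Y=(S_\infty\times X)/\!\sim$ and reducing $E^X_V$ to $E^Y_{S_\infty}$ via $x\mapsto[1,x]$, after which you quote the known theorem that orbit equivalence relations of Borel $S_\infty$-actions are classifiable by countable structures. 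That final citation is exactly the $V=S_\infty$ instance of the statement being proved, so it is not circular, but it does mean your proof outsources essentially the same machinery (the universal action plus the embedding into $\mathtt{Mod}(\mathcal{L}_1)$) that the paper makes explicit; what your route buys is a clean, reusable reduction of the closed-subgroup case to the full-group case, at the cost of verifying the standard Borel structure on the quotient and the Borelness of the induced action. Two small convention slips to repair (they are not gaps): with your factorization $g=\sigma(gV)\,\tau(g)^{-1}$ the map $[g,x]\mapsto(gV,\tau(g)x)$ is not constant on $\sim$-classes; take instead $\tau(g)=\sigma(gV)^{-1}g$, for which $\tau(gv^{-1})=\tau(g)v^{-1}$ and well-definedness is immediate. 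Likewise $[1,vx]=[v,x]=v\cdot f(x)$ rather than $v^{-1}\cdot f(x)$, and in the converse direction the witness satisfies $s\in V$ up to the same sign; the orbit computation is unaffected. Also note that the initial change-of-topology step making the $V$-action continuous is unnecessary: the induced construction and the Becker--Kechris theorem for $S_\infty$ both operate at the level of Borel actions.
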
 

\begin{proof}  The proof is in two steps. The first step  adapts the  proof of   Becker and Kechris \cite[2.7.4]{Becker.Kechris:96} in the version of Gao \cite[3.6.1]{Gao:09}.

For a Polish group $G$ and Borel actions of $G$ on Polish space $X, Y$, a \emph{Borel $G$-embedding} is a Borel map $\theta \colon X \to Y $ such  that $g \cdot \theta(x) = \theta (g \cdot x)$ for each $x \in X$.  
 Let $\+ F(G)$ be  the usual Effros space consisting   of the closed subsets of  $G$.  By  \cite[2.6.1]{Becker.Kechris:96} $\+ F(G)^\omega$, with the canonical Borel $G$-action,    is universal for Borel  $G$-actions under Borel $G$-embeddings. 
 
For a countable signature $\+ L$, let $\mathtt{Mod}(\+ L)$ denote the Polish space of $\+ L$-structures with domain $\omega$. Note that $S_\infty$ acts continuously on $\mathtt{Mod}(\+ L)$. In  Gao \cite[3.6.1]{Gao:09}  an  (infinitary) signature  $\+ L_1$ is provided, together with a Borel $S_\infty$-embedding of $\+ F(S_\infty)^\omega$ into $\mathtt{Mod}(\+ L_1)$. Restricting this yields a Borel $V$-embedding $\theta$ of $\+ F(V)^\omega$ into $\mathtt{Mod}(\+ L_1)$.

 For the second step of the proof we use  the construction in \cite[Section 1.5]{Becker.Kechris:96} (also see the discussion around \cite[2.7.4]{Becker.Kechris:96}). For a  signature $\+ L_2$, which we   may  assume to be disjoint from $\+ L_1$, this yields an $\+ L_2$-structure $M$  with domain $\omega$ such that $\mathtt{Aut}(M) = V$.   
Given $x \in  X = \+ F(V)^\omega$,   let $N_x$ be the $\+ L_1 \cup \+ L_2$-structure with domain $\omega$ which has all the relations and functions of $\theta(x) $ and $M$. If   $\rho \in S_\infty$  shows that  $N_x \cong N_y$ then $\rho$ is  an automorphism of $M$, and hence  $\rho \in V$. Therefore $x E^X_V y \lra N_x \cong N_y$. 
 
 The lemma follows because the Borel action of $V $ on $\+ F(V)^\omega$ is universal for $V$-embeddings.
\end{proof}

\section{Isomorphism between profinite groups is Borel complete for $S_\infty$ orbit equivalence relations}

Recall that a  group $G$ is nilpotent of class 2 (nil-2 for short) if it satisfies the law $[[x,y],z]=1$. Equivalently, the commutator subgroup  is contained  in the center. For a prime $p$, the group of unitriangular matrices $$\mathrm{UT}^3_3(\ZZ / p \ZZ) = \big \{ \left(\begin{matrix} 1 & a  & c \\  0  &  1 & b \\ 0 &  0 & 1
\end{matrix}\right) \colon \, a,b,c \in \ZZ / p \ZZ\big \}$$ is an example of a nil-2 group of exponent~$p$.

\begin{theorem} \label{thm:Mekler} Let $p \ge 3$ be a prime. Any $S_\infty$ orbit equivalence relation can be Borel reduced to isomorphism between profinite nil-2   groups  of exponent~$p$.  
\end{theorem}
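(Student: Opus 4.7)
The plan is a two-stage Borel reduction. By Friedman--Stanley \cite{Friedman.Stanley:89}, every $S_\infty$ orbit equivalence relation Borel-reduces to isomorphism of countable graphs on $\NN$, so it suffices to Borel reduce graph isomorphism to continuous isomorphism of profinite nil-$2$ groups of exponent~$p$. For this we carry out Mekler's classical construction inside the space $\+ N(\hat F_\omega)$ of Section~2.2.

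Given a graph $\Gamma$ on $\NN$, I would let $R(\Gamma) \in \+ N(\hat F_\omega)$ be the closed normal subgroup topologically generated by (i) the triple commutators $[[x,y],z]$ for $x,y,z \in F_\omega$, (ii) the $p$-th powers $y^p$ for $y\in F_\omega$, and (iii) the edge commutators $[x_i,x_j]$ with $\{i,j\}$ an edge of $\Gamma$. The quotient $M(\Gamma) = \hat F_\omega/R(\Gamma)$ is then a profinite nil-$2$ group of exponent $p$, namely the pro-$p$ completion of the discrete Mekler group $M_d(\Gamma)$. Using the filter representation of $\+ N(\hat F_\omega)$ from Section~6, the condition $R(\Gamma)\sub L$ for a fixed $L \in P$ depends only on finitely many bits of $\Gamma$ (those determining which edge commutators lie in $L$), so $\Gamma \mapsto R(\Gamma)$ is Borel, in fact computable. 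The forward direction of $\Gamma \cong \Gamma' \LR M(\Gamma) \cong M(\Gamma')$ is then immediate: a graph isomorphism extends through the universal property of $\hat F_\omega$ to a continuous automorphism carrying $R(\Gamma)$ onto $R(\Gamma')$.

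The backward direction is the crux and the main obstacle. I would first precompose with a standard Borel reduction that replaces each graph by a Borel-equivalent \emph{nice} graph (no twins, no isolated vertices, no short paths between distinct vertices), so that Mekler's theorem applies. For such graphs, Mekler recovers $\Gamma$ from $M_d(\Gamma)$ via a first-order interpretation whose defining formulas use only centralizer-size and commutation conditions. The task is then to verify that this interpretation survives pro-$p$ completion: in a nil-$2$ pro-$p$ group of exponent $p$, centralizers of elements are closed subgroups coinciding with the closures of their counterparts in the dense discrete subgroup $M_d(\Gamma)$, and continuous isomorphisms preserve closed centralizers and commutation. Consequently, a continuous isomorphism $M(\Gamma) \cong M(\Gamma')$ sends the Mekler-definable ``vertex set'' of $M(\Gamma)$ onto that of $M(\Gamma')$ in a commutation-preserving way, yielding an abstract graph isomorphism $\Gamma \cong \Gamma'$ and completing the reduction.
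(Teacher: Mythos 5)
Your overall architecture is the same as the paper's (Mekler coding of nice graphs, completion along the filtration in which almost all generators die, recovery of the graph via Mekler's interpretation), but there is a genuine gap exactly at the step you call the crux. Saying that ``centralizers of elements are closed subgroups coinciding with the closures of their counterparts in the dense discrete subgroup'' does not address the real difficulty, and first-order satisfaction is in any case not preserved when passing from a dense subgroup to its completion, so Mekler's theorem about $M_d(\Gamma)$ gives nothing for free about $M(\Gamma)$. The interpretation $\Gamma_2$ is evaluated over \emph{all} elements of the completed group, and the completion contains genuinely new elements: infinite products $\prod_{i\in D} x_i^{\alpha_i}\cdot c$ with $D$ infinite. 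The danger is not that centralizers of old elements move, but that such new elements could create new $\sim$-classes of size $p-1$ satisfying the commutation clause (fake vertices), or change the sizes of existing classes, so that $\Gamma_2(\hat G(A))$ is no longer isomorphic to $A$. Ruling this out is the bulk of the paper's proof: one needs a normal form for elements of $\hat G(A)$, a commutator formula valid for infinite products, and then a complete re-run of Mekler's four-case centralizer analysis for arbitrary (possibly infinite) supports $D$ --- including the case where $D$ is neither a singleton, nor an edge pair, nor dominated by a single vertex $\ell$, in which the induction showing $\bar w=\bar v^\gamma$ has to be organized along a specially chosen ordering of type $\omega$ on $D$, since the finite-support argument does not transfer directly. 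None of this is supplied, or even sketched, by your appeal to density and continuity, so the backward direction of your reduction is not established.

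Two smaller points. First, $\hat F_\omega/R(\Gamma)$ is \emph{not} the pro-$p$ completion of the discrete Mekler group: that completion is taken over all finite-index normal subgroups, of which there are uncountably many (the abelianization is an infinite-dimensional $\mathbb{F}_p$-space), so it is not countably based and would not even live in the space $\+ N(\hat F_\omega)$. What you have actually built is the completion with respect to the finite-index normal subgroups containing almost all the $x_i$, which is precisely the group $\hat G(\Gamma)$ used in the paper; the construction is fine, but the identification should be corrected, since arguing about the genuine pro-$p$ completion would concern a different group. Second, your statement of ``nice'' is garbled: Mekler needs no triangles, no squares, and for each pair of distinct vertices $x,y$ a vertex joined to $x$ and not to $y$; this is only cosmetic, but the niceness conditions are used essentially in the case analysis you omitted.
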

\begin{proof} The main result in   Mekler \cite{Mekler:81} implies  a version of Thm.\ \ref{thm:Mekler} for  abstract,   rather than profinite, groups.   Mekler associates to each symmetric and irreflexive graph $A$ a nil-2 exponent-$p$   group $G(A)$ in such a way that isomorphic graphs yield isomorphic groups.  In the countable case, the map $G$ sends    a   countable graph  $A$ to   a  countable  group  $G(A)$ in a Borel way.  
\begin{definition} A symmetric and irreflexive graph is  called \emph{nice} if it has no triangles, no squares, and for each pair of distinct vertices $x,y$, there is a vertex $z$ joined to $x$ and not to $y$.   \end{definition} 
 Mekler \cite{Mekler:81} proves that   a nice graph   $A$ can be interpreted in $G(A)$ using first-order formulas  without parameters (see \cite[Ch.\ 5]{Hodges:93} for background on interpretations). In particular,  for nice graphs $A, B$  we have  $A \cong B$ iff $G(A) \cong G(B)$. 
(For a more detailed  write-up see \cite[A.3]{Hodges:93}.) Since isomorphism of nice graphs is $S_\infty$-complete, so is isomorphism of countable nil-2 exponent $p$ groups.

Our proof is based on Mekler's, replacing the groups $G(A)$ he defined by their completions $\hat G(A)$  with respect to a suitable basis of nbhds of the identity. 

In the following all graphs will be  symmetric and irreflexive, and have  domain $\omega$. Such a graph is thus given by its set of edges   $A \sub \{ \la r, s \ra \colon \, r < s \}$.  We write $r A s$ (or   simply $rs$ if $A$ is understood)  for $\la r, s \ra \in A$. 

Let $F$ be the free nil-2 exponent-$p$ group on free generators $x_0, x_1, \ldots$. For $r \neq s$ we write $$ x_{r,s} = [x_r, x_s].$$ As noted in \cite{Mekler:81}, the centre $Z(F)$ of $F$ is an abelian group of exponent $p$ that is freely generated by the $x_{r,s}$ for $r< s$. Given a graph $A$, Mekler   lets 
\[ G(A) = F/ \la x_{r,s} \colon \, r A s \ra. \]
In particular $F = G(\ES)$. The centre  $Z= Z(G(A))$ is an abelian group of exponent $p$ freely generated by the $x_{r,s}$ such that  $\lnot r s$.  Also $G(A)/Z $ is 
an abelian group of exponent $p$ freely generated by the $Z x_i$. (Intuitively,  when  defining $G(A)$ as a quotient of $F$,   the commutators $x_{r,s}$ such that  $r A s$ vanish, but no   vertices vanish.)  

\begin{lemma}[Normal form for $G(A)$, \cite{Mekler:81,Hodges:93}] \label{lem: NF G}Every element $c$ of $Z$ can be written uniquely in the form $ \prod_{\la r, s \ra \in L} x_{r,s}^{\beta_{r,s}}$ where    $L \sub \omega \times \omega$ is a finite set of pairs $\la r, s \ra$ with $r<s$ and  $\lnot rAs$, and $0 < \beta_{r,s}  < p$. 

Every element of $G(A)$ can be written uniquely in the form $c \cdot v$  where  $c \in Z$, and $v=  \prod_{i \in D} x_i^{\alpha_i}$, for $D \sub \omega$   finite and $0 < \alpha_i < p$.  (The product  $\prod_{i \in D} x_i^{\alpha_i}$ is interpreted    along   the indices  in ascending order.)
 
\end{lemma}

Given  a graph $A$, let $R_n$ be the normal subgroup of $G(A)$   generated by the $x_i$, $i \ge  n$.    Note that $G(A)/R_n$ is finite, being a f.g.\  nilpotent torsion group.   Let    $  \widehat G(A)$ be  the   completion   of  $G(A)$    with respect to  the   set  $\+ V = \{R_n \colon \, n \in \omega\}$     (see Subsection~\ref{ss:completion}). By Lemma~\ref{lem: NF G} we have $\bigcap_n R_n= \{1\}$, so $G(A)$ embeds   into $\hat G(A)$. 
 
In  set theory one inductively defines $0 = \ES$ and $n = \{0, \ldots, n-1\}$ to obtain the  natural numbers; this will  save on notation here. A set of coset representatives for $G(A)/R_n$ is given by the $c\cdot v$  as in  Lemma~\ref{lem: NF G}, where  $D \sub n$ and $E \sub n \times n$.  The completion $\hat G(A)$  of $G(A)$ with respect to the $R_n$ consists of the maps $\rho  \in \prod_n G(A)/R_n$ such that $\rho (g R_{n+1}) = gR_n$ for each $n \in \omega$ and $g \in G(A)$. 


If $\rho (g R_{n+1}) = hR_n$ where $h = c \cdot v$ is a coset representative for $R_n$, then we can define a coset representative  $c' \cdot v'$ for $gR_{n+1}$   as follows: we obtain $c'$   from $c$ by potentially appending to  $c$ factors involving  the $x_{r,n}$ for $r< n$, and  $v'$ from $v$ by potentially appending a factor    $x_n^{\alpha_n}$.   So we can view $\rho $ as given by multiplying two  formal infinite products:

\begin{lemma}[Normal form  for $\hat G(A)$] 
Every   $c \in Z(\hat G(A))$ can be written uniquely in the form $ \prod_{\la r, s \ra \in L} x_{r,s}^{\beta_{r,s}}$ where    $L \sub \omega \times \omega$ is a   set of pairs $\la r, s \ra$ with $r<s$, $\lnot rAs$, and $0 < \beta_{r,s}  < p$.

 Every element of $\hat G(A)$ can be written uniquely in the form $c \cdot v$, where $v =  \prod_{i \in D} x_i^{\alpha_i}$,    $c \in Z(\hat G(A))$, $D \sub \omega$, and $0 < \alpha_i < p$ (the product is taken along  ascending indices).  

 \end{lemma}

We can define   the infinite products above explicitly as limits in $\hat G(A)$. We view $G(A)$ as embedded into $\hat G(A)$. Given formal products as above, let \bc $v_n =\prod_{i \in D\cap n } x_i^{\alpha_i}$   and  $c_n = \prod_{\la r, s \ra \in L \cap n \times n} x_{r,s}^{\beta_{rs}}$. \ec
For $k \ge n$ we have  $v_k v_n^{-1} \in \ol {R_n}$ and $c_k c_n^{-1} \in \ol {R_n}$. So   $v = \lim_n  v_n $ and $c = \lim_n c_n $ exist and equal the values of the formal products as defined above. 
 
Each nil-2 group satisfies the distributive law $[x,yz] = [x,y][x,z]$.  This implies that
$[x_r^\alpha, x_s^\beta] = x_{r,s}^{\alpha \beta}$. The following lemma generalises to infinite products the expression for commutators  that were obtained using these identities in   \cite[p.\ 784]{Mekler:81} (and also in \cite[proof of Lemma A.3.4]{Hodges:93}).

\begin{lemma}[Commutators] \label{lem:com} The following holds in $\hat G(A)$.  \[[\pp \alpha r D,  \pp \beta s E] = \prod_{r \in D, \, s \in E, \,  r< s, \, \lnot rs} x_{r,s}^{\alpha_r \beta_s- \alpha_s \beta_r}\]\end{lemma}  

\begin{proof} Based on the  case of finite products, by continuity of the commutator operation and  using the expressions for limits above, we have 
\begin{eqnarray*} [\pp \alpha r D,  \pp \beta s E] &=& [\lim_n \pp \alpha r {D \cap n},  \lim_n \pp \beta s {E\cap n}] \\ 
&=&  \lim_n \prod_{r \in D, \, s \in E, \,  r< s< n, \, \lnot rs} x_{r,s}^{\alpha_r \beta_s- \alpha_s \beta_r} \\ 
&=& \prod_{r \in D, \, s \in E, \,  r< s, \, \lnot rs} x_{r,s}^{\alpha_r \beta_s- \alpha_s \beta_r}.\end{eqnarray*}
\end{proof}
	The following is a direct consequence of Lemma~\ref{lem:com}.   $C(g)$ denotes the  centraliser of a group element $g$.
 \begin{lemma} \label{lem:centr} Let $  v \in \hat G(A) $. If  $0<  \gamma <p$ we have $C(  v^\gamma)= C(  v)$. \end{lemma}
Mekler's  argument employs the niceness of $A$ to show that a  copy of the set of vertices  of the given graph is first-order definable in $G(A)$. The copy of  vertex~$i$ is a certain definable equivalence class of the generator~$x_i$. 
 He provides  a first-order  interpretation $\Gamma_2 $ without parameters such that $\Gamma_2(G(A)) \cong A$. We will show  that his interpretation has the same effect  in the profinite case: $\Gamma_2(\hat G(A)) \cong A$.

 We first summarize Mekler's interpretation $\Gamma_2$Á. Let $H$ be a group with centre  $Z(H)$. \bi \item For $a \in H$ let $\bar a$ denote the  coset $a Z(H)$.  
 \item Write $\bar a \sim \bar b$ if $C(a)= C(b)$. Let $[\bar a]$ be the $\sim$ equivalence class of $\bar a$.  (Thus, for $c \in Z(H)$ we have $[\bar c] = \{ Z(H)\}$.)  \item 
 
Slightly deviating from Mekler's notation, let   \bc $[\bar a ] R [\bar b] \lra  [\bar a], [\bar b], [\bar 1]$ are all distinct and  $[a,b]=1$.  \ec\ei Let  
 
 $$\Gamma_1 ( H) = \la ( \{ [\bar a] \colon a   \in H \setminus  Z(H)\}, R \ra.$$
 
 $$\Gamma_2(H) = \{ [ \bar a] \colon \,  |[\bar a]| = p-1 \lland \ex \bar w  \,  [ \bar  a ] R  [\bar w]  \}, $$ viewed as a subgraph of $\Gamma_1 ( H)$. 
 
We are ready to verify that a nice graph can be recovered from its profinite group via the interpretation $\Gamma_2$. 
  \begin{lemma} For a nice graph $A$, we have $\Gamma_2(\hat G(A)) \cong A$ via $[\ol {x_i}] \mapsto i$. \end{lemma}

We  follow the outline of  the  proof of \cite[Lemma 2.2]{Mekler:81}. The notation there will be adapted  to $\hat G(A)$ via allowing infinite products. 

Henceforth,   in products of the  form $\pp \alpha i D$   we will assume that $D\sub \omega$ is nonempty and $0 < \alpha_i < p$ for each $i \in D$. We also set $\alpha_i=0$ for $i \not \in D$. Expressions and  calculations involving number exponents $\alpha$  will all be modulo $p$; e.g.\ $\alpha \neq 0$ means that $\alpha \not \equiv 0 \mod p$.

 All we need to show is that the formula in   $\Gamma_2$ defines the set $\{[\bar x_i] \colon i \in \omega \}$. Suppose some $\bar v$ is given where $v \not \in Z(\hat G(A))$. We may assume that    $  v= \pp \alpha  i D$ satisfying  the conventions above.   We will show that   \bc $ [\bar v] = [\bar x_k]$ for some $k \lra |[\bar v]| = p-1 \lland \ex \bar w  \,  [ \bar  v ] R  [\bar w]$.  \ec
 We distinguish four cases, and verify the equivalence in each  case.  In Case 1 we check that the formula  holds, in Case 2-4 that it  fails. 
 Recall that $rs$ is short for $rA s$, i.e.\ that vertices $r,s$ are joined. 

 \subsection*{Case 1:   {\rm $D = \{ r \}$ for some $r$}} \
 
For the first condition of the formula in $\Gamma_2$, note that we have $\bar v= \bar x_r^\alpha$ for some $\alpha \neq 0$. Suppose $\bar u \sim \bar v$ for $  u = \pp \beta k E $. Then $E = \{r\}$. For,  if $k \in E$, $k \neq r$, then by niceness of $A$ there is an  $s$ such that $rs  \lland \lnot   ks$,  so that $  x_s \in C(v) \setminus C(w)$ by  Lemma \ref{lem:com}. Thus $\bar u = \bar x_r^\beta$ with    $0< \beta < p$.

For the second condition, by niceness of $A $ we   pick $k \neq i$ such that $ik$, and let $\bar w = \bar x_k$.
   \subsection*{Case 2: {\rm   $D = \{r, s\}$ for some $r,s$  such that   $rAs$ (and hence $r \neq s$)}}  
   
   \
   
 \n  We show that   $[\bar{ v}] = \{ \bar x_r^\alpha \bar x_s^\beta \colon 0< \alpha, \beta < p\}$, and hence this equivalence class has size $(p-1)^2$. To this end we verify:

    \n \emph{Claim.}    Let $w= \pp  \beta k E$. For each $\alpha, \beta \neq 0$, we have \bc  $[w, \bar x_r^\alpha \bar  x_s^{\beta} ]= 1 \lra E \sub \{r,s\}$.  \ec
  For the implication from left to right,   if there is $k \in E\setminus \{r,s\}$, then $kr $ and $ks$, so   $A$ has a triangle.  Hence $E \sub \{r,s\}$.
  The converse implication follows by distributivity  and since $[x_r,x_s]=1$. This shows the claim.

   
   \subsection*{Case 3:  {\rm  Neither Case 1 nor 2, and there is an $\ell$ such that $i\ell$ for each $i \in D$.} }  \
 
  \n \emph{Claim.}     $[\bar{ v}] = \{ \bar v^\gamma \bar x_\ell^\beta \colon 0 < \gamma < p \lland 0\le   \beta < p\}$, and hence has size $p(p-1)$.

\n First an observation: \emph{suppose that $[v^\gamma  x_\ell^\beta,w]=1$ where  $\gamma, \beta$ are as above and $w= \pp \beta k E$. Then  $E \sub D \cup \si l$.} For, since   Case 1 and  2 don't apply, there are distinct $p,q \in D \setminus \si l$. Since $A$ has no squares,  $\ell$ is the only vertex adjacent to both $p$ and $q$. Hence, given  $j \in E \setminus \{ \ell,p,q\}$, we have $\lnot pj \lor \lnot qj$, say the former. Then $j \in D$, for otherwise, in Lemma~\ref{lem:com}, in the expansion of $[v^\gamma x_\ell^\beta, w]$, we get a term $x_{p,j}^{m}$ with  $m \neq 0$ and $\lnot pj$ (assuming that $p< j$, say).

  The inclusion  ``$\supseteq$" of the claim now follows  by Lemma~\ref{lem:com}.
    %
    For the inclusion ``$\sub$"    suppose that $[v,w]=1$ where $w= \pp \beta k E$. Then  $E \sub D \cup \si l$ by our observation.       
  By Lemma~\ref{lem:com} we now have 
  \bc $[v,w] = \prod_{r,s \in D \setminus \si \ell, \,  r< s, \, \lnot rs} x_{r,s}^{\alpha_r \beta_s- \alpha_s \beta_r}$. \ec
Let  $m = \min (D \setminus \si \ell)$. Since $\alpha_m \neq 0$ we can pick $\gamma$ such that $\beta_m= \gamma \alpha_m$. Since $A$ has no  triangles we have $\lnot rs$ for any $r< s$ such that  $r, s \in D \setminus \si \ell$, and hence $\alpha_r \beta_s = \alpha_s \beta_r$.  By induction on the elements $s$ of $D \setminus \si \ell$ this yields $\beta_s= \gamma \alpha_s $: if we have it for some  $r<s$ in $D \setminus \si \ell$ then $\beta_r \beta_s = \gamma \alpha_r\beta_s = \gamma \alpha_s \beta_r$.  Hence 
  $\beta_s= \gamma \alpha_s $, for if $\beta_r\neq 0$ we can cancel it, and if $\beta_r = 0$ then also $\beta_s=0$ because $\alpha_r \neq 0$. 
  
  This shows that $\bar w=  \bar v^\gamma \bar x_\ell^\beta$ for some $\beta$. In particular, $C(v)= C(w)$ implies that $\bar w$ has the required form.

     \subsection*{Case 4: {\rm   Neither Case 1, 2 or 3}} \
  
  \n \emph{Claim.}  $[\bar{ v}] = \{ \bar v^\gamma   \colon 0 < \gamma < p\}$, so this class  has $p-1$ elements.  \\ There is no $\bar w$ such that $[\bar{ v}] R [\bar{ w}]$.
  
 The inclusion ``$\supseteq$" of the first statement follows from Lemma~\ref{lem:centr}. We now verify the converse inclusion and the second statement. By case hypothesis  there are distinct $\ell_0, \ell_1\in D$ such that $\lnot \ell_0 \ell_1$. Since $A$ has no squares  there is at most one $q \in D$ such that $\ell_0q \lland \ell_1 q$.  If $q$ exists, as we are not in Case 3 we can choose $q' \in D$ such that $\lnot q' q$.  
 
 We define a linear order $\prec$ on  $D$, which is  of type $\omega$ if $D$ is infinite. It begins  with  $\ell_0, \ell_1$, and is followed by $q', q$ if they are defined. After that we proceed in ascending order for the remaining elements of $D$. Then  for each $v \in D \setminus \si {\min D}$ there is $u \prec v$ in $D$ (in fact among the first three elements)  such that $\lnot uv$.
  
  Suppose now that $[v,w]=1$ where $w= \pp \beta k E$. Then $E \sub D$: if $s \in E \setminus D$ there is $r \in D$ such that  $\lnot rs$. This implies $\alpha_r \beta_s= \alpha_s \beta_r$, but $\alpha_s= 0$ while the left hand side is $\neq 0$, contradiction.
  
  By a slight variant of Lemma~\ref{lem:com}, using that $\prec$ eventually agrees with $<$,   we now have $[v,w] = \prod_{r,s \in D ,  \,  r\prec  s, \, \lnot rs} x_{r,s}^{\alpha_r \beta_s- \alpha_s \beta_r}$. Choose $\gamma$ such that $\gamma \alpha_{\ell_0} = \beta_{\ell_0}$. By induction along $(D, \prec)$ we see that $\beta_s = \gamma \alpha_s$ for each $s\in D$: if $\ell_0 \prec s$ choose $r \prec s$ such that $\lnot rs$.  Since $\alpha_r \beta_s= \alpha_s \beta_r$,  as in Case 3 before we may conclude that $\beta_s = \gamma \alpha_s$.
  
  This shows that $\bar v^\gamma = \bar w$. 
   Further, if   $[\bar w] \neq [\bar 1]$   then $\gamma \neq 0$ so that  $[\bar v] = [\bar w]$, as required. 
\end{proof}

 \def\cprime{$'$}

%
%
%

\end{document}